\pgfplotsset{compat=newest}
\newcommand{\R}{{\mathbb{R}}}
\newcommand{\dd}{{\mathrm{d}}}
\newcommand{\activeset}{{\mathcal{A}}}
\newcommand{\A}{{\mathcal{S}}} % active set
\newcommand{\ns}{{n_\mathrm{s}}}
\newcommand{\Nfe}{{N_\mathrm{fe}}}
\newcommand{\paren}[1]{\left( #1 \right)}
\newcommand{\brac}[1]{\left[ #1 \right]}
\newcommand{\cbrac}[1]{\left\{ #1 \right\}}
\newcommand\paren*[1]{( #1 )}
\newcommand\brac*[1]{[ #1 ]}
\newcommand\cbrac*[1]{\{ #1 \}}
\newcommand{\vo}{\vec{o}\@ifnextchar{^}{\,}{}}
\newcommand{\Set}[2]{\left\{\, #1 \mid #2\,\right\}}
\newcommand\Set*[2]{\{\, #1 \mid #2\,\}}
\newcommand{\norm}[1]{{\left\Vert#1\right\Vert}}
\newcommand{\transp}[1]{{#1}^\top}
\newcommand{\subspace}{\mathcal{E}}
\newcommand{\C}{{\mathcal{C}}}
\newcommand{\K}{{\mathcal{K}}}
\newcommand{\xdot}{{\dot{x}}}
\newcommand{\proj}[2]{\mathrm{P}_{#1}\paren{#2}}
\newcommand\proj*[2]{\mathrm{P}_{#1}\paren*{#2}}
\newcommand{\tancone}[2]{\mathcal{T}_{#1}\paren{#2}}
\newcommand{\normcone}[2]{\mathcal{N}_{#1}\paren{#2}}
\newcommand{\nosnoc}{\texttt{nosnoc}}
\DeclareMathOperator*{\argmin}{arg\,min}
\DeclareMathOperator*{\Span}{span}
\newtheorem{theorem}{Theorem}
\newtheorem{example}{Example}
\newtheorem{lemma}[theorem]{Lemma}
\begin{document}

\title{First-Order Sweeping Processes and Extended Projected Dynamical Systems:
Equivalence, Time-Discretization and Numerical Optimal Control}
\author{
  Anton Pozharskiy$^{1}$, Armin Nurkanovi\'c$^{1}$, Moritz Diehl$^{1,2}$
  \thanks{This research was supported by DFG via Research Unit FOR 2401, project 424107692 and 525018088, by BMWK via 03EI4057A and 03EN3054B, and by the EU via ELO-X 953348.}
  \thanks{
  $^1$Department of Microsystems Engineering (IMTEK),
  $^2$Department of Mathematics, University of Freiburg, Germany,
  \sloppy\texttt{\{anton.pozharskiy,armin.nurkanovic,moritz.diehl\} @imtek.uni-freiburg.de}
	}
}
\maketitle
\thispagestyle{empty} % Removes the page number in the first page
% \copyrightnotice
\begin{abstract}
  % There exists a wide class of constrained dynamic system, i.e., systems such that by some means the state stays within a set.
  Constrained dynamical systems are systems such that, by some means, the state stays within a given set.
  Two such systems are the (perturbed) Moreau sweeping process and the recently proposed extended Projected Dynamical System (ePDS)~\cite{Sharif2019}.
  We show that under certain conditions solutions to the ePDS correspond to the solutions of a dynamic complementarity system, similar to the one equivalent to ordinary PDS.
  We then show that the perturbed sweeping process with time varying set can, under similar conditions, be reformulated as an ePDS.
  In this paper, we leverage these equivalences to develop an accurate discretization method for perturbed first-order Moreau sweeping processes via the finite elements with switch detection method~\cite{Nurkanovic2024a}.
  This allows the efficient optimal control of systems governed by ePDS and perturbed first-order sweeping processes.
\end{abstract}
\section{Introduction}
In recent years there has been renewed interest in nonsmooth dynamical systems in general and particularly in Projected Dynamical Systems (PDS) and sweeping processes.
When PDS were first introduced by Dupuis and Nagurney~\cite{Dupuis1993}, they involved smooth ODEs constrained to an set $\C$.
The requirements on the coupled set has since then been relaxed to simply ones where the tangent cone is convex, i.e. the set is uniformly prox-regular.
This class of systems have been used to study various systems including vaccination strategies~\cite{Cojocaru2007} and economic problems~\cite{Nagurney1995}.
There has also been recent work connecting PDS with the popular field of control barrier functions~\cite{Delimpaltadakis2024}.
The resurgence of interest has lead to several extensions to PDS, namely so called ``Extended'' and ``Oblique'' Projected Dynamical Systems (ePDS and oPDS, respectively).
Both extensions modify the projection operator used to transform the dynamics such that the state stays within $\C$, and have been used to treat novel control system~\cite{Sharif2019, Heemels2020}.

First-order Sweeping Processes (FOSwP) were introduced originally by Moreau~\cite{Moreau1977}.
Similarly to PDS, FOSwP have seen recent work in an optimal control context such as in the case of marine surface vehicles~\cite{Cao2021} and crowd motion~\cite{Cao2021a}.
FOSwP have also been applied in soft robotics contexts~\cite{Colombo2021}.
Recently, a numerical method using measure relaxations has been proposed, though only for stationary sets $\C$~\cite{Chhatoi2024}.

We have previously introduced an accurate discretization method for ordinary PDS using the Finite Elements with Switch Detection (FESD) method~\cite{Pozharskiy2024}.
Due to the equivalence of PDS with FOSwP with time invariant sets (sometimes called reflecting boundaries), the systems handled in \cite{Cao2021, Cao2021a} could also accurately be discretized using this method.
However, if the sweeping sets are time dependent then this equivalence does not hold.
This limitation precludes the discretization of the sweeping processes with time varying sets such as the one in~\cite[Section 8.1]{Colombo2016}.
To address this we look to ePDS as we require a similar remedy as in~\cite{Sharif2019} as projection of the gradient in the direction of time should be disallowed.

\paragraph*{Contributions}
This paper proves that under some assumptions on the time dependence of the sweeping set $\C(t)$, FOSwP can be reformulated into an ePDS.
These ePDS then can be shown to be equivalent to a dynamic complementarity system which can be accurately discretized via a slight modification of the FESD discretization for PDS~\cite{Pozharskiy2024}.
We then provide several examples of optimal control problems with this class of system.
These examples and the discretization method are implemented and available in the open-source software package \nosnoc~\cite{Nurkanovic2022b}.

\paragraph*{Notation}
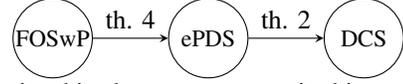
\begin{figure}[t]
  \centering
  % \begin{tikzcd}[arrows={shorten >=-5ex,shorten <=-5ex},nodes={inner sep=0pt}]
  %   \mathrm{FOSwP} \arrow[r, phantom, "\subset"] &\mathrm{ePDS} \arrow[r, phantom, "\subset"] & \mathrm{GCS} \arrow[r, phantom, "\subset"] & \mathrm{DCS}
  % \end{tikzcd}
  \begin{tikzpicture}[>=stealth]
    \node[state,inner sep=0pt,minimum size=3em] (FOSwP) {\small FOSwP};
    \node[state,inner sep=0pt,minimum size=3em] (ePDS) [right=of FOSwP] {\small ePDS};
    \node[state,inner sep=0pt,minimum size=3em] (DCS) [right=of ePDS] {\small DCS};

    \path[->] (ePDS) edge node[above] {th. \ref{th:ePDS-eGCS}} (DCS);
    \path[->] (FOSwP) edge node[above] {th. \ref{theo:mps-epds-eq}} (ePDS);
  \end{tikzpicture}
  \vspace{-0.2cm}
  \caption{Relationships between systems in this paper with edge labels denoting the theorems that relate them.}
  \label{fig:eq}
  \vspace{-0.5cm}
\end{figure}
Regard a closed set $\C\subseteq \R^n$. 
The tangent cone to $\C$ at $x$, is defined as $\tancone{\C}{x} = \Set*{{d\in\R^n}}{{\exists\{x_i\}\in\C,\{t_i\}\in\R_+,{x_i\rightarrow x}, {t_i\rightarrow t}, {d = \lim_{i\rightarrow\infty}\frac{x_i-x}{t_i}}}}$.
The normal cone to $\C$ at $x$ is defined by $\normcone{\C}{x} = \Set*{{v\in\R^n}}{\langle v,d\rangle \le 0, \forall d\in\tancone{C}{x}}$, for  $x\in \C$.
In the interior of $\C$, $\tancone{\C}{x} = \R^n$ and $\normcone{\C}{x} = \cbrac{0}$, and both are the empty set outside $\C$.
A finitely defined $\C\subset\R^n$ is written as $\C = {\Set*{x\in\R^n}{c(x)\ge 0}}$ with $c: \R^n\rightarrow\R^{n_c}$.
In the sequel, we also use $\activeset(x)$ to denote the index set of active constraints: $\activeset(x) = {\Set*{i\in \{1,\dots,n_c\}}{c_i(x) = 0}}$.
We also define $\nabla c_\activeset(x)$ as $\nabla c(x)$ with only the columns corresponding to the indices in $\activeset(x)$.
Let $\K \subseteq \R^n$ be a closed convex set.
The Euclidean projection operator is defined via a convex optimization problem ${\proj{\K}{v} =\argmin_{w \in \K}\frac{1}{2}\norm{w-v}^2_2}$.
Let linear subspace $\subspace$ be the space spanned by the columns of the matrix $E\in\R^{n\times n'}$, with $n'\le n$, i.e., ${\subspace = \Set*{s\in\R^n}{E\transp{E}x = s,\ x\in \R^n}\subseteq\R^n}$.
We denote $\subspace_{\perp} = \R^n/ \subspace$ to be the orthogonal complement of $\subspace$ which is itself a linear subspace.
Projection of a vector $v\in\R^n$ onto the linear subspace $\subspace$ has a closed form solution, namely, $\proj{\subspace}{v} = E\transp{E}v$, if the columns of $E$ form an orthonormal basis of $\subspace$.
In the following we use $0_n\in \R^n$ to define a column vector of $n$ zeros.
\section{Background}
In this section, we discuss some of the mathematical background required for the remainder of the paper.
This first covers the properties of the ePDS and the projection operator it uses, which is followed by an overview of the perturbed first-order Moreau sweeping process.
\subsection{Extended PDS}
\begin{figure}[t]
  \centering
  \begin{subfigure}{0.48\linewidth}
    \vspace{0.2cm}
    \centering
    \begin{tikzpicture}
  \begin{axis}[
    width=1.4\linewidth, axis equal image,
    height=1.4\linewidth,
    xmin=0.6,xmax=1.4,
    ymin=0.6,ymax=1.4,
    % anchor=origin,
    % rotate around={45:(current axis.origin)},
    axis x line=none, axis y line=none,
    font=\tiny,
    >=stealth,
    clip=false
    ]
    % Set
    \draw[dashed] (0.6,0.6) -- (1.4,1.4) node[pos=.85, above left, xshift=7pt] {\normalsize$c(x)$};
    %\draw[->, Mulberry, line width=2pt] (1,1) -- (0.7,1.3) node[pos=1, right, yshift=3pt, xshift=0pt] {\normalsize$\normcone{\C}{x}$};
    \draw[color=OliveGreen, ->, line width=2pt] (1,1) -- (.7,1.15) node[below left] {\normalsize$v$};
    \draw[->, Cerulean, line width=2pt] (1,1) -- (1.3,1.3);
    \draw[->, Cerulean, line width=2pt] (1,1) -- (0.7,0.7) node[pos=1, below right, yshift=2pt] {\normalsize$\tancone{\C}{x}$};
    \draw[draw=none, fill=Cerulean, opacity=0.2, rotate around={-135:(1,1)}] (1,1) -- (1.3,1) arc[start angle=0,end angle=180,radius=0.3] -- (1,1);
    \draw[<->, Orange, line width=2pt] (0.6,1) -- (1.4,1) node[pos=0.9, below right] {\normalsize$\subspace$};
    \draw[->, line width=2pt] (1,1) -- (1.15,1.15) node[pos=0.7, below right, yshift=1pt, xshift=-5pt] {\normalsize$\proj{\tancone{\C}{x},\subspace}{v}$};
    \draw[->, dashed, Orange, line width=2pt] (0.7,1.15) -- (1.15,1.15) node[pos=.65, above, yshift=-2pt] {\normalsize$w-v\in\subspace$};
    %\addplot[->, Red, line width=2pt] (1, 1) -- (0.925,0.925) node[below right] {\normalsize$\proj{\tancone{\C}{x}}{f(x)}$};
    %\draw[->, dashed, Red, line width=2pt] (0.7,1.15) -- (0.925,0.925) node[pos=.85, left, yshift=-2pt] {\normalsize$w-v\in\normcone{\C}{x}$};
  \end{axis}
\end{tikzpicture}
%%% Local Variables:
%%% mode: latex
%%% TeX-master: "../ecc2025_epds_foswp"
%%% End:
    \vspace{-0.7cm}
    \caption{$\proj{\tancone{\C}{x},\subspace}{v}$.}
  \end{subfigure}
  \hfill
  \begin{subfigure}{0.48\linewidth}
    \vspace{0.2cm}
    \centering
    \begin{tikzpicture}
  \begin{axis}[
    width=1.4\linewidth, axis equal image,
    height=1.4\linewidth,
    xmin=0.6,xmax=1.4,
    ymin=0.6,ymax=1.4,
    % anchor=origin,
    % rotate around={45:(current axis.origin)},
    axis x line=none, axis y line=none,
    font=\tiny,
    >=stealth,
    clip=false
    ]
    % Set
    \draw[dashed] (0.6,0.6) -- (1.4,1.4) node[pos=.85, above left, xshift=7pt] {\normalsize$c(x)$};
    %\draw[->, Mulberry, line width=2pt] (1,1) -- (0.7,1.3) node[pos=1, right, yshift=3pt, xshift=0pt] {\normalsize$\normcone{\C}{x}$};
    \draw[color=OliveGreen, ->, line width=2pt] (1,1) -- (.7,1.15) node[below left] {\normalsize$v$};
    \draw[->, Cerulean, line width=2pt] (1,1) -- (1.3,1.3);
    \draw[->, Cerulean, line width=2pt] (1,1) -- (0.7,0.7) node[pos=1, below right, yshift=2pt] {\normalsize$\tancone{\C}{x}$};
    \draw[draw=none, fill=Cerulean, opacity=0.2, rotate around={-135:(1,1)}] (1,1) -- (1.3,1) arc[start angle=0,end angle=180,radius=0.3] -- (1,1);
    %\draw[<->, Orange, line width=2pt] (0.6,1) -- (1.4,1) node[pos=0.9, below right] {\normalsize$\subspace$};
    %\draw[->, line width=2pt] (1,1) -- (1.15,1.15) node[pos=0.7, below right, yshift=1pt, xshift=-5pt] {\normalsize$\proj{\tancone{\C}{x},\subspace}{v}$};
    %\draw[->, dashed, Orange, line width=2pt] (0.7,1.15) -- (1.15,1.15) node[pos=.65, above, yshift=-2pt] {\normalsize$w-v\in\subspace$};
    \addplot[->, Red, line width=2pt] (1, 1) -- (0.925,0.925) node[below right] {\normalsize$\proj{\tancone{\C}{x}}{v}$};
    \draw[->, dashed, Red, line width=2pt] (0.7,1.15) -- (0.925,0.925) node[pos=.85, left, yshift=-2pt] {\normalsize$w-v$};
  \end{axis}
\end{tikzpicture}
%%% Local Variables:
%%% mode: latex
%%% TeX-master: "../ecc2025_epds_foswp"
%%% End:
    \vspace{-0.7cm}
    \caption{$\proj{\tancone{\C}{x}}{v}$.}
  \end{subfigure}
  \caption{Comparing the extended projection operator (left) and the Euclidean projection operator (right).}
  \label{fig:ePDS-proj}
\end{figure}
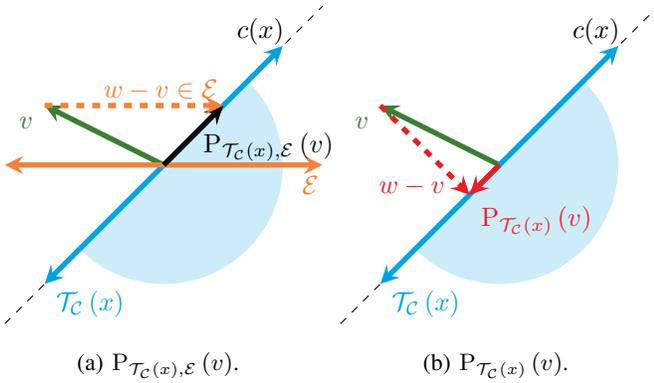
The extended projection operator $\proj{\K,\subspace}{f(x,u)}$ is defined as:
\begin{subequations}
  \label{eq:e-proj}
  \begin{align}
    \proj{\K,\subspace}{v} = &\argmin_{w\in \R^n}\frac{1}{2}\norm{w-v}_2^2,\\
                             & \textrm{s.t.}\quad w\in \K,\ w-v\in\subspace
  \end{align}
\end{subequations}
This convex Quadratic Program (QP) is, intuitively, equivalent to finding the closest point $w\in\C$ to $v$ which can be reached via a straight line in the subspace $\subspace$.
An example of the difference between the the standard Euclidean projection and this operator can be seen in \Cref{fig:ePDS-proj}.
If we apply this projection operator with $\K = \tancone{\C}{x}$ and $v = f(x,u)$, the resulting QP is necessarily convex if the tangent cone $\tancone{\C}{x}$ is convex and its feasible set, $\tancone{C}{x}\cap f(x,u) + \subspace\neq \emptyset$.
This yields the extended PDS, introduced in~\cite{Sharif2019}, which takes the form:
\begin{subequations}\label{eq:ePDS}
  \begin{align}
    \xdot &= \proj{\tancone{\C}{x},\subspace}{f(x,u)},\label{eq:ePDS:xdot}\\
    x(0)&\in \C,
  \end{align}
\end{subequations}
with time-independent ${\C = \Set*{x\in\R^n}{c(x)\ge 0}}$ and $\subspace$ which is a linear subspace of $\R^n$.
In the context of these systems the nonemptiness of QP's feasible set is also useful in conjunction with~\cite[Theorem 2.4]{Cornet1983} to guarantee that the state $x$ stays in $\C$ for all $t\in [0, T]$.
\subsection{First-Order Sweeping Processes}
We treat the perturbed first-order sweeping process:
\begin{align}\label{eq:MSP}
  \xdot&\in f(x,u) -\normcone{\C(t)}{x},
\end{align}
with the time-dependent set ${\C(t) = \Set*{x\in\R^n}{c(x,t)\ge 0}}$, defined by the function ${c:\R^{n_x}\times\R\rightarrow \R^{n_c}}$.
We assume ${f: \R^{n_x}\times\R^{n_u} \rightarrow \R^{n_x}}$ is at least once differentiable in both $x$ and $u$ throughout this paper.

\section{Equivalence of ePDS to a DCS}
In this section, we discuss the equivalence of the ePDS to particular Dynamic Complementarity System (DCS).
The set $\C$ is considered to be time-independent and finitely defined by $c:\R^{n_x}\rightarrow\R^{n_c}$.
\begin{figure}[t]
  \centering
  \vspace{0.2cm}
  \begin{tikzpicture}
  \begin{axis}[
    width=.68\linewidth, axis equal image,
    height=.68\linewidth,
    xmin=0.6,xmax=1.4,
    ymin=0.6,ymax=1.4,
    % anchor=origin,
    % rotate around={45:(current axis.origin)},
    axis x line=none, axis y line=none,
    font=\tiny,
    >=stealth,
    clip=false
    ]
    % Set
    \draw[dashed] (0.6,0.6) -- (1.4,1.4) node[pos=.85, above left, xshift=7pt] {\normalsize$c(x)$};
    %\draw[->, Mulberry, line width=2pt] (1,1) -- (0.7,1.3) node[pos=1, right, yshift=3pt, xshift=0pt] {\normalsize$\normcone{\C}{x}$};
    \draw[color=OliveGreen, ->, line width=2pt] (1,1) -- (.7,1.15) node[below left] {\normalsize$v$};
    \draw[->, Cerulean, line width=2pt] (1,1) -- (1.3,1.3);
    \draw[->, Cerulean, line width=2pt] (1,1) -- (0.7,0.7) node[pos=1, below right, yshift=2pt] {\normalsize$\tancone{\C}{x}$};
    \draw[draw=none, fill=Cerulean, opacity=0.2, rotate around={-135:(1,1)}] (1,1) -- (1.3,1) arc[start angle=0,end angle=180,radius=0.3] -- (1,1);
    \draw[<->, dashed, Orange, line width=1pt] (0.6,1) -- (1.4,1) node[pos=0.1, above left] {\normalsize$\subspace$};
    \draw[->, Orange, line width=2pt] (1,1) -- (.7,1) node[pos=0.9, below] {\normalsize$Ev_\subspace$};
    \draw[<->, dashed, Magenta, line width=1pt] (1,0.6) -- (1,1.4) node[pos=0.9, right] {\normalsize$\subspace_{\perp}$};
    \draw[->, Magenta, line width=2pt] (1,1) -- (1,1.15) node[pos=0.9, left] {\normalsize$E_\perp v_\perp$};
    \draw[->, line width=2pt] (1,1) -- (1.15,1.15) node[pos=0.7, below right, yshift=3pt, xshift=-5pt] {\normalsize$\phi'$};
    \draw[->, Plum, line width=2pt] (1,1) -- (1.15, 1) node[pos=0.7, below] {\normalsize$E\phi_\subspace$};
    %\draw[->, Plum, line width=2pt, opacity=0.5] (1.15,1) -- (1.4, 1) node[pos=0.7, above, opacity=1] {\normalsize$E\phi_\subspace$};
    %\addplot[->, Red, line width=2pt] (1, 1) -- (0.925,0.925) node[below right] {\normalsize$\proj{\tancone{\C}{x}}{f(x)}$};
    %\draw[->, dashed, Red, line width=2pt] (0.7,1.15) -- (0.925,0.925) node[pos=.85, left, yshift=-2pt] {\normalsize$w-v\in\normcone{\C}{x}$};
  \end{axis}
\end{tikzpicture}
%%% Local Variables:
%%% mode: latex
%%% TeX-master: "../ecc2025_epds_foswp"
%%% End:
  \vspace{-0.4cm}
  \caption{Schematic of vectors in \cref{eq:alt_proj} applied to $\K = \tancone{\C}{x}$.}
\end{figure}
\subsection{Dynamic Complementarity System}

% Differentiating the complementarity conditions with respect to time yields:
% \begin{equation*}
%   0\le \dot{c}=\transp{\nabla c_k}(f(x))+ E\transp{E}{\nabla c_k}\lambda_k)\perp \lambda_k\ge 0.
% \end{equation*}
% if $c(x,t) = 0$ and $f(x)$ points out of the set we get $\transp{\nabla c_k}(f(x))+ E{\nabla c_k}\lambda_k) = 0$.
% Solving for ${\lambda_k = -\inv{(\transp{\nabla c_k}E{\nabla c_k})}\transp{\nabla c_k}f(x)}$ gives us that if $\transp{\nabla c_k}E{\nabla c_k}$ is invertible along a trajectory $x$ then a solution for $\lambda_k$ exists and therefore solutions to this GCS exist.

We now prove that the ePDS can be reformulated into DCS.

%%%%% old proof method
In preparation for that we prove the following lemma for an equivalent projection definition of $\proj{K,\subspace}{v}$:
\begin{lemma}\label{lem:alt_proj}
  The subspace projection operator onto convex cone $K \subset\R^n$ with $\subspace = \Span E \subseteq \R^n$:
  \begin{subequations}
    \label{eq:eproj}
    \begin{align}
      \phi = \proj{\K, \subspace}{v} = &\argmin_{w\in\R^n}\frac{1}{2}\norm{w-v}^2_2,\label{eq:eproj:min}\\
                                       &\textrm{s.t.}\ w\in \K,\ w-v\in\subspace
    \end{align}
  \end{subequations}
  is equivalent to the projection in the subspace (i.e. $\phi = \phi'$):
  \begin{subequations}\label{eq:alt_proj}
    \begin{align}
      v_{\subspace}&= \transp{E}v,\ v_{\perp} = \transp{E_{\perp}}v,\\
      \phi' &= E\phi_{\subspace} + E_{\perp}v_{\perp},\\
      \phi_{\subspace} &= \argmin_{\tilde{w}\in\R^{n'}}\frac{1}{2}\norm{\tilde{w} - v_{\subspace}}^2_2,\label{eq:alt_proj:min}\\
                   &\quad\ \textrm{s.t.}\ E\tilde{w} + E_{\perp}v_{\perp}\in K\label{eq:alt_proj:inc}
    \end{align}
  \end{subequations}
  where the columns of $E\in\R^{n\times n'}$ form orthonormal basis for the subspace $\subspace$ and the columns of $E_{\perp}\in\R^{n\times (n-n')}$ form an orthonormal basis for the orthogonal subspace $\subspace_{\perp}$.
  Without loss of generality, assume that $E$ and $E_{\perp}$ are both selection matrices, i.e. $[E E_{\perp}] = I\in \R^{n\times n}$, and form an orthogonal decomposition of the space $\R^n$.
\end{lemma}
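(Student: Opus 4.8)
The plan is to show that the programs \eqref{eq:eproj} and \eqref{eq:alt_proj:min} are the \emph{same} optimization problem written in two coordinate systems, so that their minimizers are related by the affine map $\tilde w \mapsto E\tilde w + E_\perp v_\perp$, giving $\phi' = E\phi_\subspace + E_\perp v_\perp = \phi$. The whole argument rests on two facts about the orthonormal decomposition, which I would record first: since the columns of $[E\ E_\perp]$ form an orthonormal basis of $\R^n$, we have $\transp E E = I_{n'}$, $\transp{E_\perp}E_\perp = I_{n-n'}$, $\transp E E_\perp = 0$, and the resolution of identity $E\transp E + E_\perp\transp{E_\perp} = I_n$. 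In particular every $v\in\R^n$ splits as $v = E v_\subspace + E_\perp v_\perp$ with $v_\subspace = \transp E v$ and $v_\perp = \transp{E_\perp}v$, and $E$ acts as an isometry, $\norm{Ez}_2 = \norm{z}_2$ for all $z\in\R^{n'}$.

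Next I would parametrize the feasible set of \eqref{eq:eproj}. The constraint $w - v \in \subspace$ is precisely $w = v + Es$ for some $s\in\R^{n'}$; substituting the split of $v$ gives $w = E(v_\subspace + s) + E_\perp v_\perp$. Introducing the change of variables $\tilde w = v_\subspace + s \in \R^{n'}$ turns this into $w = E\tilde w + E_\perp v_\perp$, so the membership $w\in\K$ becomes exactly the inclusion \eqref{eq:alt_proj:inc}, and $s\mapsto\tilde w$ is an affine bijection between the feasible set of \eqref{eq:eproj} and that of \eqref{eq:alt_proj:min}. For the objective, the isometry gives $\tfrac12\norm{w-v}_2^2 = \tfrac12\norm{Es}_2^2 = \tfrac12\norm{s}_2^2 = \tfrac12\norm{\tilde w - v_\subspace}_2^2$, which is exactly the objective of \eqref{eq:alt_proj:min}. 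Hence the two programs take identical values along corresponding feasible points.

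Both objectives are strictly convex quadratics minimized over convex feasible sets (preimages of the convex cone $\K$ under affine maps), so each program has a unique minimizer whenever it is feasible, and the two feasibility conditions coincide under the bijection. Matching the unique minimizers through $s\mapsto\tilde w$ then yields the optimal $w^\star = E\phi_\subspace + E_\perp v_\perp = \phi'$, i.e. $\phi = \phi'$.

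I expect the only delicate bookkeeping to be in the middle step: verifying that the affine reparametrization maps the feasible set of one problem \emph{onto} that of the other while preserving the objective exactly. Both hinge entirely on orthonormality, through $E\transp E + E_\perp\transp{E_\perp} = I_n$ and $\norm{Es}_2 = \norm{s}_2$. I would remark that the stated selection-matrix normalization $[E\ E_\perp] = I$ is only a convenience: it is never used beyond these orthonormality relations, so the equivalence in fact holds for any orthonormal bases $E$, $E_\perp$ of $\subspace$ and $\subspace_\perp$.
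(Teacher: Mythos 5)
Your proof is correct, and it is organized differently from the paper's. You set up an explicit affine bijection $\tilde w \mapsto E\tilde w + E_\perp v_\perp$ between the feasible sets of \eqref{eq:alt_proj:min} and \eqref{eq:eproj}, show it preserves the objective via the isometry $\norm{Es}_2 = \norm{s}_2$, and conclude that the unique minimizers correspond. The paper instead asserts that the feasible sets ``correspond'' (justified only by noting $E\phi_\subspace + E_\perp v_\perp - v = (\phi_\subspace - v_\subspace, 0)\in\subspace$) and then argues by contradiction: assuming $\phi_\subspace \neq \transp{E}\phi$, it derives a contradiction in the case $\norm{\phi-v}_2^2 < \norm{\phi_\subspace - v_\subspace}_2^2$ and in the case $\norm{\phi-v}_2^2 > \norm{\phi_\subspace - v_\subspace}_2^2$, leaving the equal-values case to be dispatched implicitly by the strict convexity mentioned at the outset. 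Both arguments rest on the same orthonormal decomposition, but yours is the more self-contained: it explicitly verifies that the reparametrization maps one feasible set \emph{onto} the other (which the paper's contradiction argument tacitly needs in both directions to form the competing feasible points $\transp{E}\phi$ and $(\phi_\subspace, v_\perp)$), and it handles uniqueness cleanly rather than by case exclusion. Your closing remark that the selection-matrix normalization $[E\ E_\perp]=I$ is never actually needed beyond orthonormality is also correct and worth keeping.
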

\begin{proof}
  % The assumptions on the structure of $E$, $E_{\perp}$, simplify the equivalent projection to
  % \begin{align*}
  %   v &= (v_{\subspace}, v_{\perp}),\ \phi' = (\phi_{\subspace},v_{\perp}),\\
  %   \phi_{\subspace} &= \argmin_{\tilde{w}\in\R^{n'}}\frac{1}{2}\norm{\tilde{w} - v_{\subspace}}^2_2,\\
  %   &\quad\ s.t.\ (\tilde{w},v_{\perp})\in \K
  % \end{align*}
  % and enforces that $\transp{E_{\perp}} \phi = v_{\perp} = \transp{E_{\perp}} \phi'$ due to the orthogonal decomposition of $\tilde{w}-v$.
  Note that both forms of the projection operator, \cref{eq:eproj} and \cref{eq:alt_proj}, are strictly convex and have corresponding feasible sets in terms of $\phi$ and $\phi'$.
  This comes directly from the fact that ${E\phi_{\subspace} + \transp{E_{\perp}}v_{\perp} - v = (\phi_{\subspace}-v_{\subspace}, 0)\in\subspace}$ and the constraint in \Cref{eq:alt_proj:inc}.
  Now we show that $\phi = \phi'$ for all $\subspace$, $\K$, and $v$.
  Assume for contradiction that $\phi_{\subspace}\neq \transp{E}\phi$.
  If $\norm{\phi - v}^2_2 < \norm{\phi_{\subspace} - v_{\subspace}}^2_2$ then $\norm{\transp{E}\phi - v_{\subspace}}^2_2 < \norm{\phi_{\subspace} - v_{\subspace}}^2_2$ which contradicts \Cref{eq:alt_proj:min}.
  If $\norm{\phi - v}^2_2 > \norm{\phi_{\subspace} - v_{\subspace}}^2_2$ then $\norm{(\phi_{\subspace}, v_{\perp}) - v}^2_2 < \norm{\phi - v}^2_2$ which contradicts \Cref{eq:eproj:min}.
  Therefore, ${\phi = \phi'}$.
\end{proof}
%%%%% old proof method

% The constraints in the projection of \cref{prop:alt_proj} can also be interpreted as $(\tilde{w},0_{n-n'})\in K\cap\subspace_{\perp}$ where $K\cap\subspace_{\perp}$ is a closed convex cone if $K$ is a closed convex cone.
% This comes from the fact that the intersection of closed convex cones is itself a closed convex cone and that any subspace of a finite vector space (as $\subspace$ is) is itself a closed convex cone \todo{cite or prove? probably cite}.
% A consequence of this we can rewrite \cref{eq:ePDS} as:
% \begin{subequations}
%   \begin{align}
%     \xdot &= \proj{\tancone{\C}{x}\cap\subspace_{\perp}}{f(x,u)} + \proj{\subspace_{\perp}}{f(x,u)},\\
%     &= \proj{\tancone{\C}{x}\cap\subspace_{\perp}}{E\transp{E}f(x,u)} + E_{\perp}\transp{E_{\perp}}f(x,u),
        %     \end{align}
        %         \end{subequations}
        %         under the assumption that $\tancone{C}{x}\cap f(x,u) + \subspace \neq \emptyset$.
\begin{theorem}\label{th:ePDS-eGCS}
  Assume that $\C$ is a finitely defined set: ${\C = \Set*{x\in\R^{n_x}}{c(x)\ge 0}}$, and $c_i(x)$ for $i \in \activeset(x)$ satisfy the Linear Independence Constraint Qualification (LICQ) for all $x(t)$, i.e., $\nabla c_i(x)$ are linearly independent for $i \in \activeset(x)$.
  Further, assume that for all time ${\tancone{C}{x}\cap f(x,u) + \subspace\neq \emptyset}$.
  The absolutely continuous solutions $x(t)$ to the ePDS in \Cref{eq:ePDS} with $\subspace = \Span E\subseteq \R^{n_x}$ and $x(0)\in\C$, are equivalent to solutions to the DCS:
  \begin{subequations}
    \label{eq:eDCS}
    \begin{align}
      \xdot &= f(x, u) + E\transp{E}\nabla_xc(x)\lambda,\\
      0 &\le \lambda \perp c(x)\ge 0.
    \end{align}
  \end{subequations}
\end{theorem}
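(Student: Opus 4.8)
The plan is to characterize the unique minimizer of the extended projection through its KKT system and to recognize that system, after eliminating the subspace component, as exactly the DCS relations. I would work with the reduced form of the projection from \Cref{lem:alt_proj}, which is strictly convex and therefore has a unique optimizer; this uniqueness is what ultimately yields a genuine equivalence rather than a one-sided inclusion.

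First I would invoke LICQ to replace the tangent cone by its linearization, $\tancone{\C}{x} = \Set*{d\in\R^{n_x}}{\transp{\nabla c_i(x)}d\ge 0,\ i\in\activeset(x)}$, so that the feasibility constraint \cref{eq:alt_proj:inc} becomes the finite system $\transp{\nabla c_\activeset(x)}(E\tilde w + E_\perp v_\perp)\ge 0$ with $v=f(x,u)$. Writing the Lagrangian of \cref{eq:alt_proj:min} with multipliers $\lambda_\activeset\ge 0$, stationarity in $\tilde w$ gives $\phi_\subspace = \transp{E}v + \transp{E}\nabla c_\activeset(x)\lambda_\activeset$. Substituting into $\phi' = E\phi_\subspace + E_\perp v_\perp$ and using the orthogonal decomposition $E\transp{E} + E_\perp\transp{E_\perp} = I$ (valid since $[E\ E_\perp]=I$ by \Cref{lem:alt_proj}), the terms $E\transp{E}v + E_\perp\transp{E_\perp}v$ collapse to $v$, leaving $\xdot = \phi' = f(x,u) + E\transp{E}\nabla c_\activeset(x)\lambda_\activeset$. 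Extending $\lambda$ by zeros on the inactive indices replaces $\nabla c_\activeset(x)\lambda_\activeset$ by $\nabla_x c(x)\lambda$, which is exactly the DCS dynamics. For the complementarity, dual feasibility gives $\lambda\ge 0$; the feasibility assumption together with \cite[Theorem 2.4]{Cornet1983} guarantees $x(t)\in\C$, hence $c(x)\ge 0$; and $\transp{\lambda}c(x)=0$ holds index-by-index, since inactive indices carry $\lambda_i=0$ while active indices carry $c_i(x)=0$. This shows every ePDS trajectory solves the DCS.

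For the converse I would run the computation backwards: given a DCS solution, KKT stationarity and dual feasibility hold by construction with the same $\lambda_\activeset$, and feasibility of the subspace constraint is immediate because $\xdot - f(x,u) = E\transp{E}\nabla_x c(x)\lambda \in \Span E = \subspace$. Strict convexity of the reduced QP makes the KKT conditions sufficient for optimality, so $\xdot$ must coincide with the projection. The step I expect to be the main obstacle is recovering the velocity-level conditions, namely $\xdot\in\tancone{\C}{x}$ and the complementarity $0\le\lambda_\activeset\perp\transp{\nabla c_\activeset(x)}\xdot\ge 0$, from the position-level complementarity $0\le\lambda\perp c(x)\ge 0$ of the DCS. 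This is where absolute continuity of $x(t)$ and its confinement to $\C$ enter: on any interval where constraint $i$ is active one has $c_i(x(t))\equiv 0$, hence $\tfrac{\dd}{\dd t}c_i = \transp{\nabla c_i(x)}\xdot = 0$ a.e., while on the inactive set $\lambda_i=0$; together these reproduce the velocity-level tangency and complementarity. I would handle the measure-zero set of isolated contact times separately, noting it does not affect the almost-everywhere identity of the two velocity fields and therefore not the resulting trajectories.
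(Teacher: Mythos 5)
Your proposal is correct and follows essentially the same route as the paper's proof: invoke LICQ to write $\tancone{\C}{x}$ as a polyhedral cone, pass to the reduced QP of \Cref{lem:alt_proj}, write its (necessary and sufficient) KKT conditions, and eliminate the stationarity equation to recover the dynamics and complementarity of \cref{eq:eDCS}. The only difference is one of care rather than of method: you spell out the converse direction and the passage between the velocity-level complementarity $0\le\lambda_i\perp\transp{\nabla c_i(x)}\xdot\ge 0$ produced by the KKT system and the position-level condition $0\le\lambda\perp c(x)\ge 0$ of the DCS (via absolute continuity and $\dot c_i=0$ a.e.\ on active intervals), a step the paper compresses into the single assertion that the final condition ``is equivalent to $0\le\lambda\perp c(x)\ge 0$.''
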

\begin{proof}
  %%%% Old Proof
  We apply the same simplifying assumptions as in \Cref{lem:alt_proj} to define $E\in\R^{n_x\times n'}$ and $E_{\perp}\in\R^{n_x\times (n_x-n')}$.
  Recall that the tangent cone to $\C$ at $x$ with active set $\activeset(x)$ is ${\tancone{\C}{x} = \Set*{v\in\R^{n}}{\transp{\nabla c_i(x)}v \ge 0, i \in\activeset(x)}}$.
  This cone is closed and convex if the constraint functions $c_i(x),\ i\in\activeset(x)$ satisfy a constraint qualification such as LICQ~\cite{Rockafellar1997}.
  Now we use the equivalent optimization problem in \Cref{eq:alt_proj} for the right hand side of \Cref{eq:ePDS}, i.e:
  \begin{align*}
    \xdot &= E\xdot_{\subspace} + \transp{E_{\perp}}v_{\perp},\\
    v_{\subspace}&= \transp{E}f(x,u),\ v_{\perp} = \transp{E_{\perp}}f(x,u),\\
    \xdot_{\subspace} &= \argmin_{\tilde{w}\in\R^{n'}}\norm{\tilde{w} - v_{\subspace}},\\    
          &\quad\ \mathrm{s.t.}\ E\tilde{w} + E_{\perp}v_{\perp}\in \tancone{\C}{x}
  \end{align*}
  The constraint set $\tancone{\C}{x}$ in the convex optimization problem above is the polyhedral set:
  \begin{equation*}
    \Set*{\tilde{w}\in \R^{n'}}{\transp{\nabla c_i}(E\tilde{w} + E_{\perp}v_{\perp})\ge 0,\ i\in\activeset(x)},
  \end{equation*}
  which is a convex polytope, which means the whole problem is a convex quadratic program.
  Therefore, we have the Karush-Kuhn-Tucker (KKT) conditions:
  \begin{align*}
    \tilde{w}-v_{\subspace} -\transp{E}\sum_{j\in\activeset(x)}\lambda_j\nabla c_j(x) = 0,\\
    0\le \lambda_j\perp\transp{\nabla c_j}(E\tilde{w} + E_{\perp}v_{\perp})\ge 0,
  \end{align*}
  for $j\in\activeset(x)$ which are both necessary and sufficient.
  Solving the first equation for $\tilde{w}$ and substituting it into the expression for $\xdot$ yields:
  \begin{align*}
    %E\paren{v_{\subspace} + \sum_{j\in\activeset'(w)}\lambda_j\transp{\nabla_xc_j}E} + \transp{E_{\perp}}v_{\perp}\\
    \xdot &= f(x,u) + E\transp{E}\sum_{j\in\activeset(w)}\lambda_j\nabla c_j(x),
  \end{align*}
  which in combination with the complementarity conditions (into which we also substitute the expression for $\tilde{w}$) this can be rewritten as:
  \begin{align*}
    \xdot &= f(x,u) + E\transp{E}\nabla c(x)\lambda\\
    0&\le \lambda_j\perp\transp{\nabla c_j}\xdot\ge 0,\\
    \lambda_l &= 0, \quad l\in \{1,\ldots,n_c\}\setminus\activeset(x),
  \end{align*}
  for $j\in\activeset(x)$.
  Note that $\transp{\nabla c_j}\xdot = \dot{c}(x)$.
  The final condition comes from the definition of $\tancone{\C}{x}$ and is equivalent to $0\le \lambda \perp c(x)\ge 0$.
  % Finally, note that the right hand side of the complementarity is the expression for $\dot{c}_j(x)$.
  % Taking the derivative of the complementarity $0\le \lambda \perp c(x)\ge 0$, with respect to time yields the equivalent expression, i.e. $0\le \lambda \perp c(x)\ge 0$ implies $0\le \lambda_j\perp\transp{\nabla c_j}(E\tilde{w} + E_{\perp}\transp{E_{\perp}}f(x,u))\ge 0$, $j\in\Set*{k}{c_k(x)=0}$.
  This is therefore equivalent to the DCS in \Cref{eq:eDCS}.
  %%%% Old Proof
\end{proof}

With this we have a computationally useful equivalent DCS to which we can apply the previously developed FESD method.

% \begin{remark}
%   We also note that the well-posedness and existence of solutions of \Cref{eq:eDCS} seems to rely either on $\lambda = 0$ or on the invertibility of a certain matrix, namely $A(x) = \transp{\nabla c_\activeset(x)}E\transp{E}\nabla c_\activeset(x)$ where $\nabla c_\activeset(x)$ contains the columns $\nabla c_k(x)$ with ${k\in\activeset(x) = \Set*{i\in \cbrac*{1,\ldots,n_c}}{c_i(x)=0}}$
%   This comes from differentiating \Cref{eq:eDCS} with respect to time once yielding a closed form solution for $\lambda$ in the form:
%   \begin{equation*}
%     \lambda_\activeset =
%     \begin{cases}
%       \inv{A(x)}\nabla c_\activeset(x) f(x,u), & \nabla c_\activeset(x) f(x,u) < 0,\\
%       0, & \nabla c_\activeset(x) f(x,u) \ge 0,
%     \end{cases}
%   \end{equation*}
%   which comes from
%   $$\dot{c}_\activeset(x) = \transp{\nabla c_\activeset(x)}(f(x,u) + E\transp{E}\nabla c_\activeset(x)\lambda) = 0,$$
%   as in the case of the equivalent GCS for a standard PDS.
%   The invertibility of $A(x)$ is not guaranteed in general, and requires the matrix $\transp{E}\nabla c_\activeset(x)$ to have full column rank.
%   However, the assumption that $\tancone{\C}{x} \cap f(x,u) + \subspace$ is nonempty implies that there is some $\nabla c_\activeset\lambda$ such that $\dot{c}_{\activeset} \ge 0$, i.e. for some $\activeset'\subseteq \activeset$, $\transp{E}\nabla c_{\activeset'}(x)$ has full column rank.
% \end{remark}

\section{Time Varying FOSwP as ePDS}
In this section we discuss the reformulation of FOSwP as an ePDS in order to apply the results described in the previous section.
First, a general reformulation of the FOSwP to an ePDS is presented and the equivalence is verified.
This section concludes with an example reformulation of a simple FOSwP to the ePDS.

In the following, the set $\C$ is considered to be time-dependent and finitely defined by $c:\R^{n_x}\times\R\rightarrow\R^{n_c}$.

\subsection{FOSwP reformulation}
We now return to the First-Order Sweeping Process (FOSwP) $\xdot\in f(x,u) -\normcone{\C(t)}{x}$ and the definition of the finitely-defined time dependent set ${\C(t) = \Set*{x\in\R^n}{c(x,t)\ge 0}}$ with ${c:\R^{n_x}\times\R\rightarrow \R^{n_c}}$.
The mapping from $t\in [0,T]$ to $\C(t)$ is assumed to be forward Lipschitz~\cite[Definition 2]{Hauswirth2018} as $c(x,t)$ is assumed to be at least once continuously differentiable in $x$ and Lipschitz continuous in $t$.
The normal cone to this set at time $t$ is the finitely defined cone $\normcone{\C(t)}{x} = \Set*{{d\in\R^{n_x}}}{{d = \sum_{i\in \activeset(x)}\lambda_i\nabla c_{i}(x,t),\ \lambda_i \ge 0}}$~\cite{Acary2008}.
We reformulate the FOSwP into an ePDS by introducing an auxiliary clock state $\tau$, the dynamics of which we do not project.
This allows us to accurately discretize the system using FESD.

The equivalence of an ePDS and the original sweeping process is formalized in \Cref{theo:mps-epds-eq}.
To aid in the proving of this theorem we require the following lemma:
\begin{lemma}\label{lem:msp-dcs}
  For $\C(t)$ uniformly prox-regular and of bounded variation~\cite{Edmond2006} and $f(x,u)$ at least once differentiable there exists a unique continuous solution to the sweeping process \Cref{eq:MSP} which corresponds to the solution to the DCS:
  \begin{subequations}\label{eq:msp-dcs}
    \begin{align}
      \xdot &= f(x,u) + \nabla_xc(x,t)\lambda\\
      0&\le \lambda\perp c(x,t) \ge 0\label{eq:msp-dcs:comp}
    \end{align}
  \end{subequations}
\end{lemma}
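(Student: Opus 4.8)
The plan is to establish the two assertions of \Cref{lem:msp-dcs} in turn: first the existence of a unique, absolutely continuous trajectory solving the sweeping process \Cref{eq:MSP}, and then the pointwise-in-time equivalence of that trajectory with a solution of the DCS in \Cref{eq:msp-dcs}. I would treat the first part as a direct appeal to the established well-posedness theory for perturbed sweeping processes, while the second is essentially an algebraic rewriting of the differential inclusion through the generator form of the normal cone together with a complementarity bookkeeping argument.

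For existence and uniqueness, I would invoke the classical result for perturbed Moreau sweeping processes driven by a uniformly prox-regular, bounded-variation moving set~\cite{Edmond2006}. Its hypotheses match the present ones: $\C(t)$ is uniformly prox-regular and of bounded variation, and the single-valued perturbation $f(x,u)$ is Lipschitz in $x$, which follows (at least locally) from the standing assumption that $f$ is continuously differentiable. Because the set-valued map $t\mapsto\C(t)$ is taken to be forward Lipschitz (equivalently, $c(x,t)$ Lipschitz in $t$), the moving set has no jumps, so the resulting solution is not merely of bounded variation but absolutely continuous; this is precisely what licenses writing the dynamics in the almost-everywhere differential form $\xdot\in f(x,u)-\normcone{\C(t)}{x}$ rather than in a measure-differential form.

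For the equivalence, I would use the Lagrange-multiplier (polyhedral) representation of the normal cone that follows from the finitely-defined structure of $\C(t)$ under the constraint qualification inherited from \Cref{th:ePDS-eGCS}: the cone of nonnegative combinations of the active constraint gradients $\nabla_x c_i(x,t)$, $i\in\activeset(x)$, equals $-\normcone{\C(t)}{x}$. Substituting this into \Cref{eq:MSP} turns the inclusion into the statement that there exist multipliers $\lambda_i\ge 0$, supported on $\activeset(x)$, with $\xdot = f(x,u)+\nabla_x c(x,t)\lambda$, since $f(x,u)+\nabla_x c(x,t)\lambda\in f(x,u)-\normcone{\C(t)}{x}$; extending $\lambda$ by zero on the inactive indices leaves the dynamics unchanged. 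It then remains to recognize the support condition as the complementarity condition \Cref{eq:msp-dcs:comp}: viability of the sweeping process keeps $c(x,t)\ge 0$, the generator representation gives $\lambda\ge 0$, and $\lambda_i=0$ whenever $c_i(x,t)>0$, so $\transp{\lambda}c(x,t)=0$ componentwise. The converse direction is immediate, since any DCS solution has its correction term $\nabla_x c(x,t)\lambda$ lying in $-\normcone{\C(t)}{x}$ by the same representation, hence solves \Cref{eq:MSP}.

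I expect the main obstacle to be matching the hypotheses of the cited well-posedness theorem to the present finitely-defined setting and justifying the generator representation of $\normcone{\C(t)}{x}$ uniformly along the trajectory. Prox-regularity secures the local geometry and the uniqueness of solutions, but the explicit polyhedral (active-gradient) description of the normal cone, together with the measurability and local integrability of the selected multiplier $t\mapsto\lambda(t)$ needed for $x(\cdot)$ to be absolutely continuous, relies on a constraint qualification such as LICQ holding for all $t$. Carefully stating and exploiting this qualification in the time-varying case, and reconciling it with the prox-regularity assumption, is where the real care is required.
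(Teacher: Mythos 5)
Your proposal is correct and takes essentially the same route as the paper, which in fact writes out no proof at all: it defers existence and uniqueness to the cited well-posedness theory for prox-regular, bounded-variation sweeping processes and the equivalence to the normal-cone-to-complementarity transformation of \cite[Section 4]{Brogliato2010}, which is precisely the active-gradient multiplier argument you spell out. Your remark that the generator representation of $\normcone{\C(t)}{x}$ requires a constraint qualification (and that the multiplier selection must be measurable) is a fair observation about a hypothesis the lemma leaves implicit, but it does not change the argument.
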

The proof of this lemma follows from the transformation in~\cite[Section 4]{Brogliato2010}.
\begin{theorem}\label{theo:mps-epds-eq}
  The solutions of \Cref{eq:MSP} with $\C(t)$ uniformly prox-regular and of bounded variation~\cite{Edmond2006}, and $f(x,u)$ continuously differentiable, correspond to the solutions of the ePDS:
  \begin{subequations}\label{eq:msp-epds}
    \begin{align}
      \dot{y} &= \proj{\tancone{\C}{y},\subspace}{(f(x,u), 1)},\\
      (x(0),0) &\in \C,
    \end{align}
  \end{subequations}
  with ${y = (x,\tau)\in\R^{n_x+1}}$, functions ${\hat{c}(y) = c(x,\tau)}$, ${\hat{c}:R^{n_x+1}\rightarrow\R^{n_c}}$, ${\C = \Set*{y\in\R^{n_x+1}}{\hat{c}(y) \ge 0}}$, and subspace $\subspace = \Set*{s\in\R^n}{E\transp{E}x = s,\ x\in \R^n}$, with ${E = \paren{e_1,\ldots,e_n}\in\R^{(n_x+1)\times n_x}}$ where $e_i$ is the $i$th basis vector of $\R^{n_x+1}$.
\end{theorem}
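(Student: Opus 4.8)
The plan is to prove the equivalence by routing both systems through a common dynamic complementarity system, using \Cref{lem:msp-dcs} on the sweeping-process side and \Cref{th:ePDS-eGCS} on the ePDS side. The key observation is that, once the clock state $\tau$ is appended, the augmented set $\C = \Set*{y\in\R^{n_x+1}}{\hat c(y)\ge 0}$ is genuinely time-\emph{independent} in $y = (x,\tau)$, so \Cref{th:ePDS-eGCS} applies to \Cref{eq:msp-epds} verbatim, with ambient vector field $(f(x,u),1)$ and subspace $\subspace = \Span E$.

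First I would invoke \Cref{th:ePDS-eGCS} to replace the ePDS \Cref{eq:msp-epds} by the augmented DCS
\[
  \dot y = (f(x,u),1) + E\transp{E}\,\nabla_y\hat c(y)\,\lambda, \qquad 0\le\lambda\perp\hat c(y)\ge 0 .
\]
The decisive step is then a direct computation with the specific selection matrix $E=(e_1,\dots,e_{n_x})$: since $E\transp{E} = \mathrm{diag}(I_{n_x},0)\in\R^{(n_x+1)\times(n_x+1)}$, left-multiplication annihilates the last row of $\nabla_y\hat c(y)$, whose first $n_x$ rows are $\nabla_x c(x,\tau)$ and whose last row is $\partial_\tau c(x,\tau)$. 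Reading off the two blocks of $y=(x,\tau)$ gives $\xdot = f(x,u) + \nabla_x c(x,\tau)\lambda$ together with $\dot\tau = 1$. This is exactly where the subspace choice does its work: the time direction lies in $\subspace_\perp$, so it is never projected and the clock advances unperturbed.

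Next, the initial condition $(x(0),0)\in\C$ forces $\tau(0)=0$, so the clock integrates to $\tau(t)=t$. Substituting $\tau=t$ into the remaining relations recovers precisely the DCS \Cref{eq:msp-dcs}, with the complementarity $0\le\lambda\perp c(x,t)\ge 0$ matching \Cref{eq:msp-dcs:comp}. Finally, \Cref{lem:msp-dcs} identifies the absolutely continuous solutions of this DCS with the solutions of the sweeping process \Cref{eq:MSP}, closing the chain $\text{FOSwP}\Leftrightarrow\text{DCS}\Leftrightarrow\text{ePDS}$ in both directions.

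The main obstacle is checking that the hypotheses of \Cref{th:ePDS-eGCS} really do transfer to the augmented system. Two items need care. For LICQ, I would argue that appending the extra entry $\partial_\tau c_i$ to each gradient $\nabla_x c_i$ cannot destroy linear independence, so a constraint qualification on $c$ in $x$ yields one for $\hat c$ in $y$. For the nonemptiness requirement $\tancone{\C}{y}\cap\paren{(f,1)+\subspace}\neq\emptyset$, note that fixing the time component $d_\tau=1$ reduces it to finding $d_x$ with $\transp{\nabla_x c_i}d_x \ge -\partial_\tau c_i$ for $i\in\activeset(y)$; under LICQ the matrix $\transp{\nabla_x c_\activeset}$ has full row rank, so this linear system is solvable and the feasible set is nonempty. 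I expect the genuinely delicate part to be reconciling the prox-regularity and bounded-variation assumptions used here with the LICQ and feasibility assumptions of \Cref{th:ePDS-eGCS}, since the two theorems phrase their regularity requirements in different languages and both must hold for the augmented finitely-defined set.
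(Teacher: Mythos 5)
Your proposal is correct and follows essentially the same route as the paper: both pass through the augmented time-independent DCS via \Cref{th:ePDS-eGCS}, use the block structure $E\transp{E}=\mathrm{diag}(I_{n_x},0)$ to decouple the unprojected clock $\dot\tau=1$ and reduce to \Cref{eq:msp-dcs}, and then invoke \Cref{lem:msp-dcs} to connect to the sweeping process. Your closing discussion of transferring LICQ and the feasibility condition to the augmented set is actually more careful than the paper's proof, which does not address those hypotheses explicitly.
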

\begin{proof}
  As a consequence of \Cref{lem:msp-dcs} we have an equivalent DCS for \Cref{eq:MSP}.
  By \Cref{th:ePDS-eGCS}, for $y(0) = (x(0),0)\in \C$ the solution to \Cref{eq:msp-epds} is the solution to the DCS:
  \begin{align*}
    \dot{y} &= (f(x,u),1) + E\transp{E}\nabla_{y}\hat{c}(y)\lambda'\\
    0&\le \lambda'\perp \hat{c}(y) \ge 0
  \end{align*}
  From the structure of $\subspace$, $\dot\tau = 1$ and therefore $\tau(t) = t$.
  The fact that $E\transp{E}\nabla_{y} \hat{c}(y) = \nabla_xc(x,t)$ given $\tau(t) = t$ for a given $x$ comes directly from the fact that the last row and column in $E\transp{E}$ is zero.
  This means that the solution to \Cref{eq:msp-epds} can, by substituting the above two equivalences, be decomposed:
  \begin{align*}
    \xdot &= f(x,t) - \nabla_xc(x,t)\lambda',\\
    \dot{\tau} &= 1\\
    0&\le \lambda'\perp c(x,t) \ge 0
  \end{align*}
  which is  \Cref{eq:msp-dcs} with an autonomous ODE for the numerical time $\tau$.
\end{proof}
\subsection{FOSwP reformulation example}
\begin{figure}[t]
  \centering
  \vspace{0.1cm}
    \centering
    \begin{tikzpicture}
  \begin{axis}[
    width=0.7\linewidth, axis equal image,
    height=0.7\linewidth,
    xmin=-0.1,xmax=3.1,
    ymin=-0.1,ymax=3.1,
    axis x line=center, axis y line=center,
    xlabel={$t$},
    xlabel style={below right},
    ylabel={$x$},
    ylabel style={above left},
    axis line style={shorten >=-3pt},
    view={0}{90},
    >=stealth
    ]
    \draw[-,color=Mahogany, ultra thick] (0, 2) --  (1, 1) -- (3.1, 3.1);
    \addplot[only marks, mark=*, thick, color=Mahogany] ({0},{2)}) node[above right, yshift=7pt] {$y(0)$};
    \addplot[only marks, mark=*, thick, color=Mahogany] ({1},{1)});
    %\node[color=Mahogany,] at (1.65, -0.5) {$x(t)$};
    \draw[dashed,color = gray, thick] (-1,-1) -- (4, 4) node[pos=0.33, below, xshift=10pt] {$c(y)\mathord{=}0$};
    \draw[Orange,very thick,<->] (2.6,0.1) -- (2.6,1.6) node[pos=0.9, right] {$\subspace$};
    \addplot3[domain=0.1:2.9, y domain=0.1:2.9,color=OliveGreen, samples = 5, samples y=5,quiver={u=1, v=-1, scale arrows=0.2, every arrow/.append style={-{Latex[length=1pt 1,width=5pt 1]}}},line width=2pt]{0};
    %\addplot[domain=0:2,color=OliveGreen,-stealth,samples=7,quiver={u=y, v=0, scale arrows=0.15}, thick]{-1};
    %\addplot[domain=0.33:2,color=Plum,-stealth,samples=6,quiver={u=y, v=-x, scale arrows=0.15}, thick]{-1};
    %\addplot[domain=-1:-.33,color=OliveGreen,-stealth,samples=3,quiver={u=y, v=-x, scale arrows=0.15},thick]{-1};
    %\fill[pattern={Lines[angle=-45,distance=5.0]}, pattern color=gray] (-1.2,-1) rectangle (2.1,-2);
  \end{axis}
\end{tikzpicture}
%%% Local Variables:
%%% mode: latex
%%% TeX-master: "../ecc2025_epds_foswp"
%%% End:
    \vspace{-0.7cm}
    \caption{\Cref{ex:simple-swp} ePDS trajectory.}
  \vspace{-0.7cm}
  \label{fig:ex1-sol}
\end{figure}
We now provide a representative tutorial example for this reformulation and show that the solution for the ePDS reformulation is indeed the solution to the FOSwP.
\begin{example}\label{ex:simple-swp}
  Consider a sweeping process as in \Cref{eq:MSP} with ${x\in\R}$, $\C(t) = \Set*{x\in\R}{c(x,t) = x-t\ge 0}$, ${x(0) = 2\in \C(0)}$, and $f(x,t) = -1$.
  The solution to this sweeping process is the piecewise linear function:
  \begin{equation*}
    x(t) = \begin{cases}
      2-t, & 0\le t < 1,\\
      t, & 1 \le t,
    \end{cases}
  \end{equation*}
  as the state travels in the negative direction at a constant rate until it meets the boundary of $\C$ at $t=1$ and then is swept by this boundary in the positive direction at a constant rate.
  We now reformulate this into an ePDS with ${y = (x,\tau)\in\R^2}$, ${c(y) = x-\tau}$, ${f(y,t) = (-1, 1)}$, and $E = (1,0)$.
  This yields the projection matrix:
  \begin{equation*}
    E\transp{E} =
    \begin{bmatrix}
      1 & 0\\
      0 & 0\\
    \end{bmatrix}.
  \end{equation*}
  The normal cone of $\C$ in this case is defined as:
  \begin{equation*}
    \normcone{\C}{y} =
    \begin{cases}
      \emptyset, & y\notin\C,\\
      \{0\}, & c(y) > 0,\\
      \Set*{d\in\R^2}{d = (1,-1)\lambda,\ \lambda \ge 0}, & c(y) = 0,
    \end{cases}
  \end{equation*}
  and the tangent cone:
  \begin{equation*}
    \tancone{\C}{y} =
    \begin{cases}
      \emptyset, & y\notin\C,\\
      \R^2, & c(y) > 0,\\
      \Set*{(a,b)\in\R^2}{a\ge b}, & c(y) = 0.
    \end{cases}
  \end{equation*}
  The solution to this system is again piecewise linear with a switch at $t = \tau(t) = 1$.
  For the first section of the trajectory the tangent cone is $\R^2$ therefore $y = (2-t,t)$ while $c(y) = 2-t - t > 0$, i.e. $t<1$.
  At $t = 1$ the tangent cone becomes $\Set*{(a,b)\in\R^2}{a\ge b}$ which is a half-plane.
  Note that if we use the Euclidean projection operator, the resulting derivative $\dot{y}$ would be $(0,0)$.
  However, as we only allow projection along the $x$ direction the closest point in $\tancone{\C}{x}$ along $\subspace$ is the vector $(1,1)$.
  This vector is exactly tangential to the line $x-\tau = 0$ and for all remaining time $t \ge 1$: $c(y) = 0$ and $\dot{y} = (1,1)$.
  From this we get that the evolution of the first element of $y$ matches exactly the evolution of the state of the FOSwP.
  A graphical representation the trajectories can be seen in \Cref{fig:ex1-sol}.
\end{example}

\section{Finite Elements with Switch Detection}
Direct optimal control functions on a ``first discretize, then optimize'' paradigm.
As such to apply this technique we must discretize the nonsmooth DCS in \Cref{eq:eDCS}.
However, applying standard Runge-Kutta (RK) schemes to the system yields low accuracy, related to the order of the discontinuity~\cite{Stewart2010,Nurkanovic2020}.
In this case, the system is either first order or second order discontinuous~\cite{Calvo2003}, i.e. either the first or second derivative of the state $x$ is discontinuous, which corresponds to entering the boundary of $\C$ and leaving it, respectively.
This yields, at best, $O(h^2)$ error where $h$ is the step size.
To improve on this accuracy we extend the recently introduced FESD for PDS method to this class of extended PDS~\cite{Pozharskiy2024}.

We consider the trajectory entering or exiting the boundary of $\C$ as a ``switch''.
This method can be directly applied to the DCS in \Cref{eq:eDCS} as it demonstrates the same switching behaviors described in~\cite[Proposition 2]{Pozharskiy2024}.
Therefore, the FESD discretized system with $\Nfe$ integration steps, called finite elements, using an RK discretization scheme with $\ns$ stages with Butcher tableau~\cite{Hairer1996}, $a_{i,j}$, $b_j$, $c_i$ for ${i,j = 1,\ldots,\ns}$ is written as:
\begin{subequations}
  \label{eq:discrete-dcs}
  \begin{align}
    &x_{n,i}\! = \! x_{n,0} + h_n\sum_{j=1}^{\ns}a_{i,j}v_{i,j},\\
    &v_{i,j} = f(x_{n,j},u)+ E\transp{E}{\nabla c(x_{n,j})}\lambda_{n,j}\\
    &0 \le c(x_{n,i}) \perp \lambda_{n,i} \ge 0,\\
    &0 \le c(x_{n,i}) \perp \lambda_{n,j} \ge 0,\\
    &0 \le c(x_{n-1,\ns}) \perp \lambda_{n,j} \ge 0,\\
    &0 \le c(x_{n,i}) \perp \lambda_{n-1,\ns} \ge 0,
  \end{align}
\end{subequations}
with $n = 1,\ldots,\Nfe$, $i = 1,\ldots,\ns$, and $j = 1,\ldots,\ns$, and a fixed control $u$.
Note that in this case $h_n$ is allowed to vary which allows the complementarity conditions to be satisfied.
The complementarity constraints here enforce that the active set $\A(x) = \Set*{i\in\cbrac{1,\ldots,n_c}}{c_i(x) = 0}$ stays constant over each finite element.
This is augmented with the step-equilibration constraints in~\cite[III.A]{Pozharskiy2024} which enforce that $h_n\neq h_{n-1}$ only if there is a switch on the boundary between finite elements $n$ and $n-1$.
This discretization accurately identifies switches in the PDS active set and recovers the higher order accuracy of the RK discretization scheme.
Using a higher accuracy discretization allows for a coarser control grid, while maintaining good constraint satisfaction and accuracy.
This discretization is essentially the same as the one found in~\cite{Pozharskiy2024} with the $E\transp{E}\nabla c(x)\lambda$ term replacing $\nabla c(x)\lambda$, and more details can be found there.  
\section{Numerical Experiments}
In this section we first describe an optimal control problem, with which we demonstrate the numerical efficiency of the FESD discretization applied to a first-order sweeping process. 
We then provide an example optimal panning problem with dynamics defined by perturbed sweeping processes with time dependent sets.
These examples are implemented in \texttt{nosnoc}~\cite{Nurkanovic2022b} and solved with the Scholtes' relaxation~\cite{Scholtes2001} and homotopy loop using IPOPT~\cite{Wachter2002} to a complementarity tolerance of $10^{-10}$~\cite{Nurkanovic2024b}.

\subsection{Wave-rider Problem}
\begin{figure}[t]
  \centering
  \vspace{0.2cm}
  \begin{subfigure}[t]{0.49\linewidth}
    \includegraphics[width=\linewidth, valign=t]{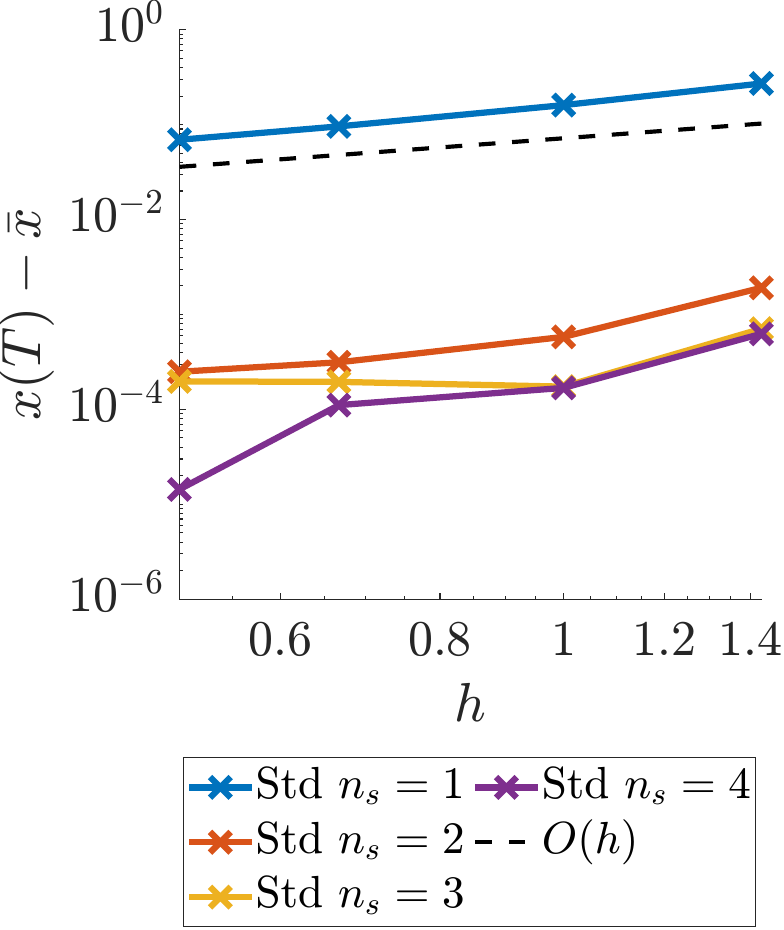}
    \vspace{0.3cm}
    \caption{Fixed time stepping.}
  \end{subfigure}
  \begin{subfigure}[t]{0.49\linewidth}
    %\vspace{-0.01cm}
    \includegraphics[width=\linewidth, valign=t]{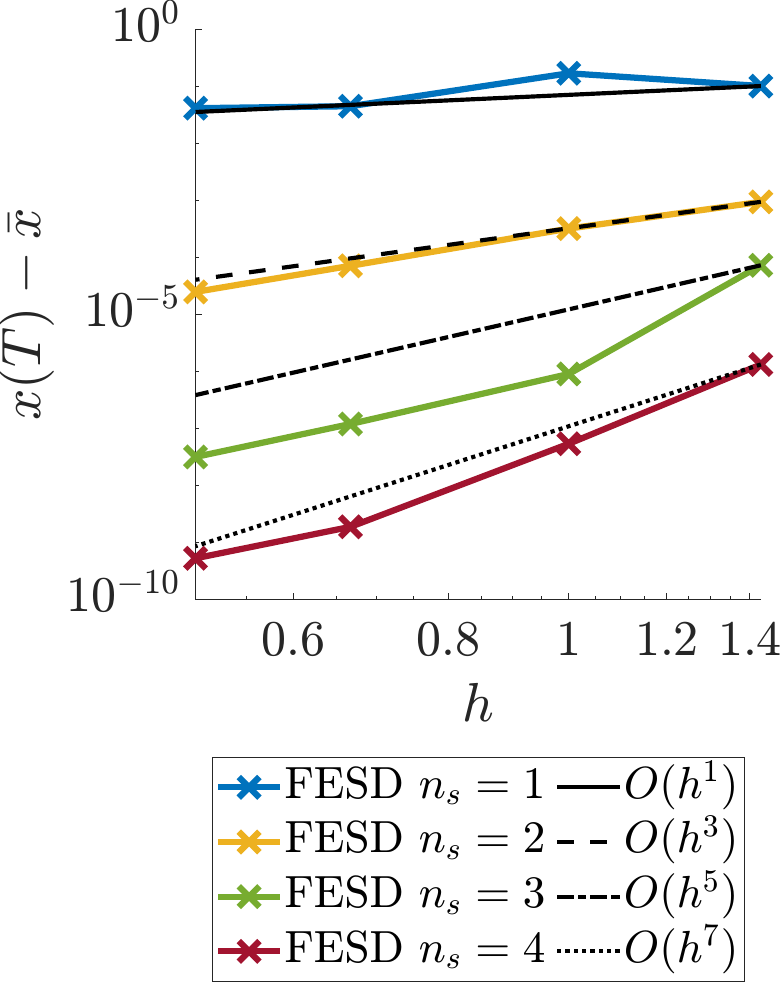}
    \caption{FESD.}
  \end{subfigure}
  \vspace{-0.1cm}
  \caption{Plots of terminal error vs step size.}
  \vspace{-.5cm}
  \label{fig:order-plots}
\end{figure}
This example is a simple optimal control problem (OCP) over a perturbed sweeping process.
The continuous-time OCP is formulated as:
\begin{mini*}
  {\substack{x(\cdot). u(\cdot)}}
  {\int_{0}^{T}\!u(t)^2 \dd t}
  {}
  {}
  \addConstraint{x(t)}{\in (0,u(t)) - \normcone{\C(t)}{x},\ }{t\in[0,T]}
  \addConstraint{x(0)}{=\hat{x}}
  \addConstraint{x(T)}{=\bar{x}}
  \addConstraint{-1\le u(t)}{\le 1,}
\end{mini*}
with $x\in\R^2$, $u\in\R$, $\hat{x} = (0,2)$, $\bar{x} = (1,4)$, and time-varying set ${\C(t) = \Set*{z\in\R^2}{z_1+\cos(z_2-t)\ge 0}}$.
The system is under-actuated and the controller must use the moving boundary defined by $\C(t)$ to translate in the unactuated direction.
We run an experiment with ${N=[5,10,25,50,100]}$ control stages using both the FESD discretization and the standard discretization (${n_s = [1,2,3,4]}$ stage RADAU-IIA integration steps) to show the efficiency of the FESD method applied to such problems.
\Cref{fig:order-plots} shows that even with few control stages the problem discretized with FESD yields a significantly lower terminal error ($\norm{x(T)-\bar{x}}_2$) when accurately simulated using the calculated control.
The control and state trajectory of this OCP can be seen in \Cref{fig:waverider}.
\begin{figure}[t]
  \centering
  \vspace{0.4cm}
  \includegraphics[width=0.9\linewidth]{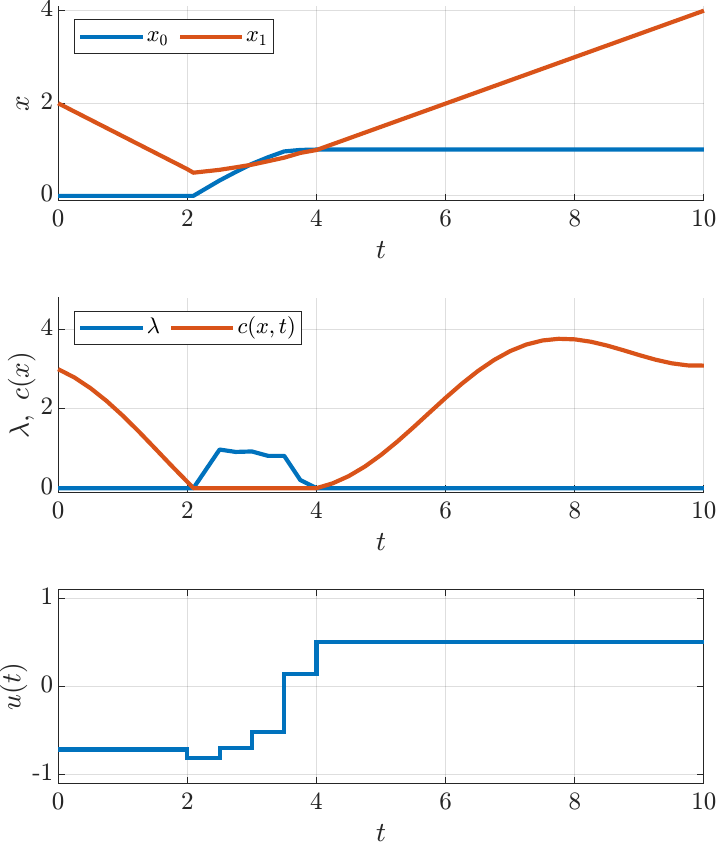}
  \caption{Plot of wave-rider example trajectory.}
  \vspace{-.7cm}
  \label{fig:waverider}
\end{figure}
\subsection{Path Planning with Moving Obstacles}
\begin{figure*}[t]
  \centering
  \vspace{0.2cm}
  \begin{subfigure}{0.19\linewidth}
    \includegraphics[width=\linewidth]{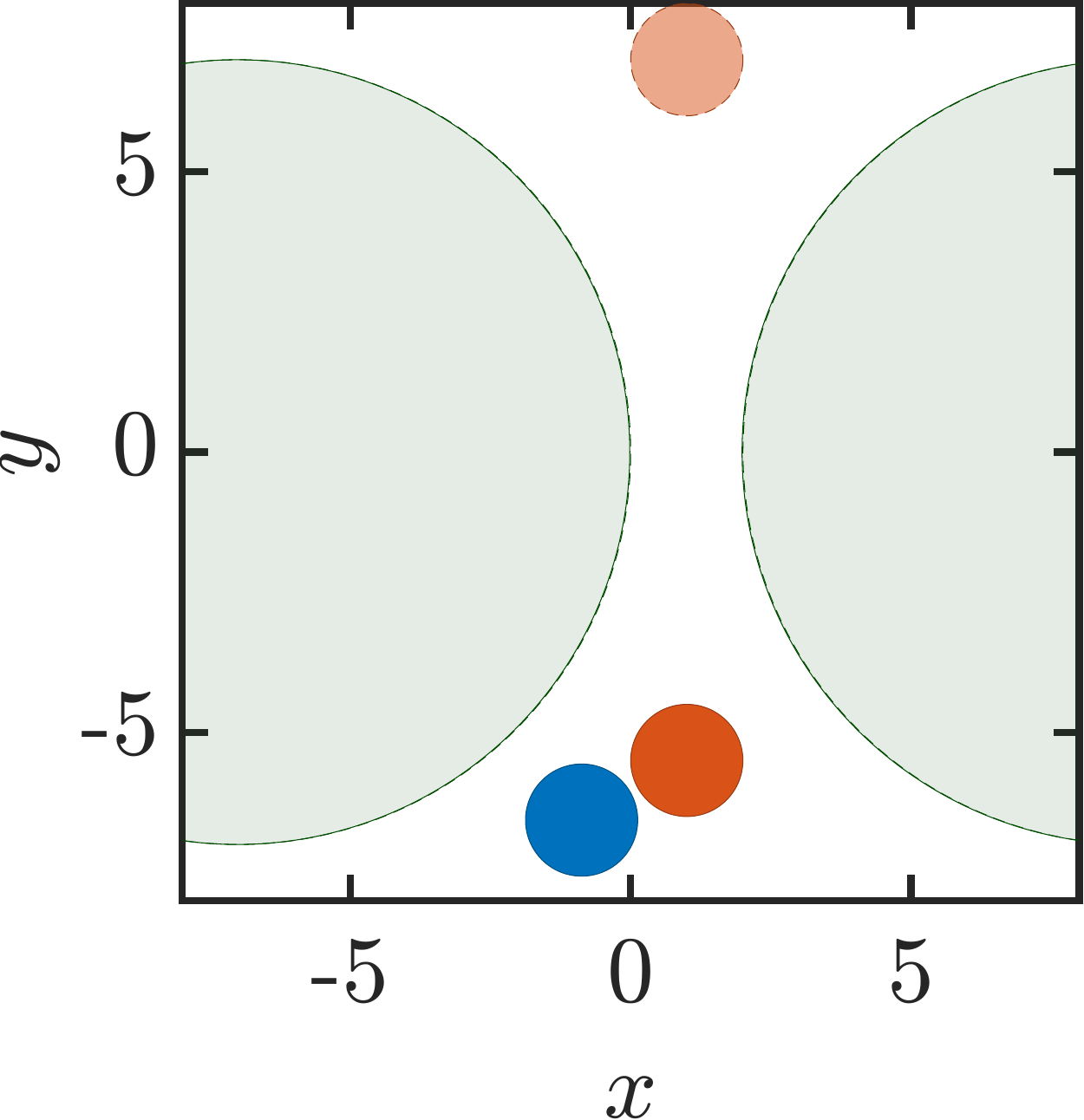}
    \caption{$t=0$}
  \end{subfigure}
  \begin{subfigure}{0.19\linewidth}
    \includegraphics[width=\linewidth]{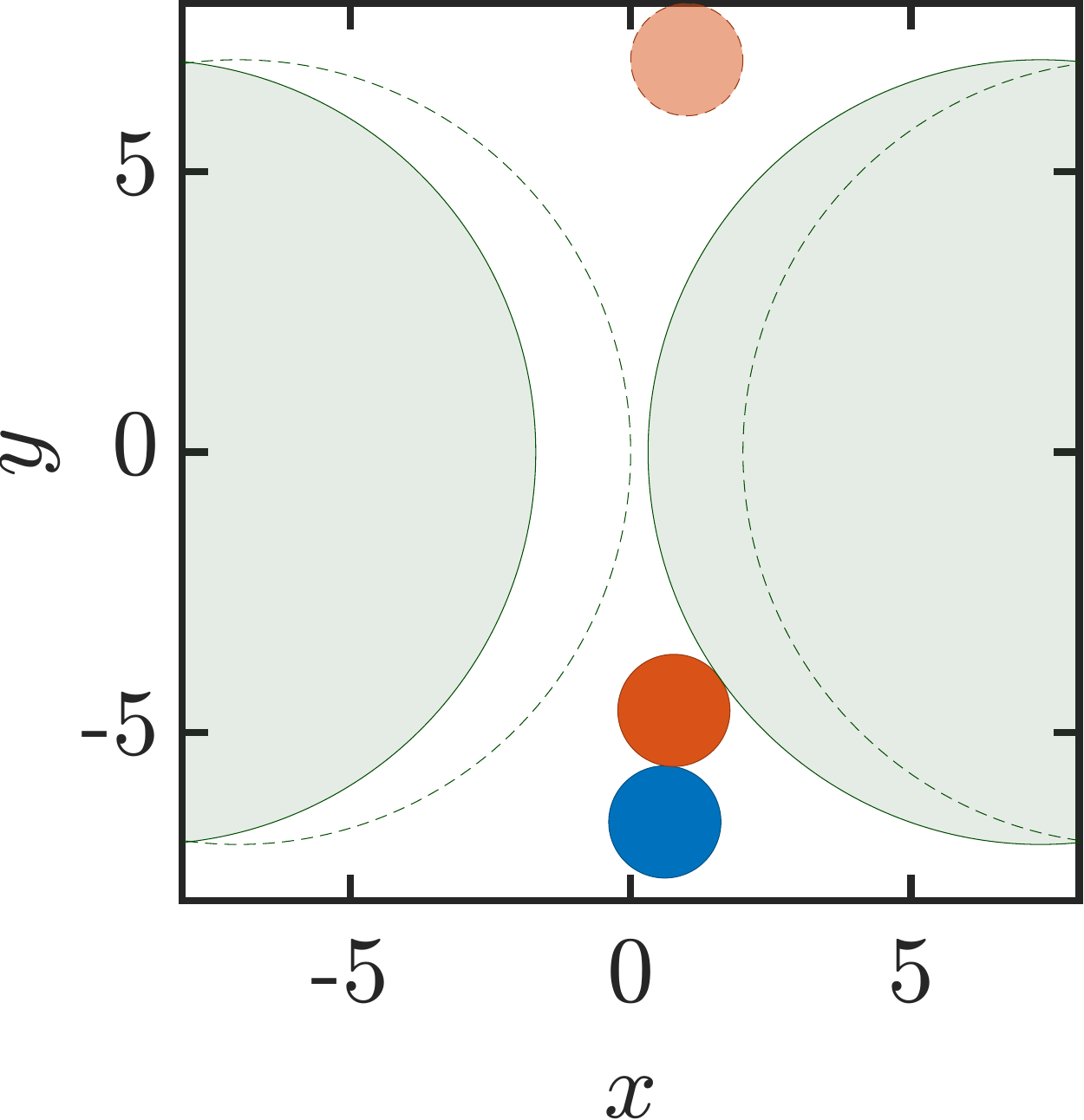}
    \caption{$t=2.7001$}
  \end{subfigure}
  \begin{subfigure}{0.19\linewidth}
    \includegraphics[width=\linewidth]{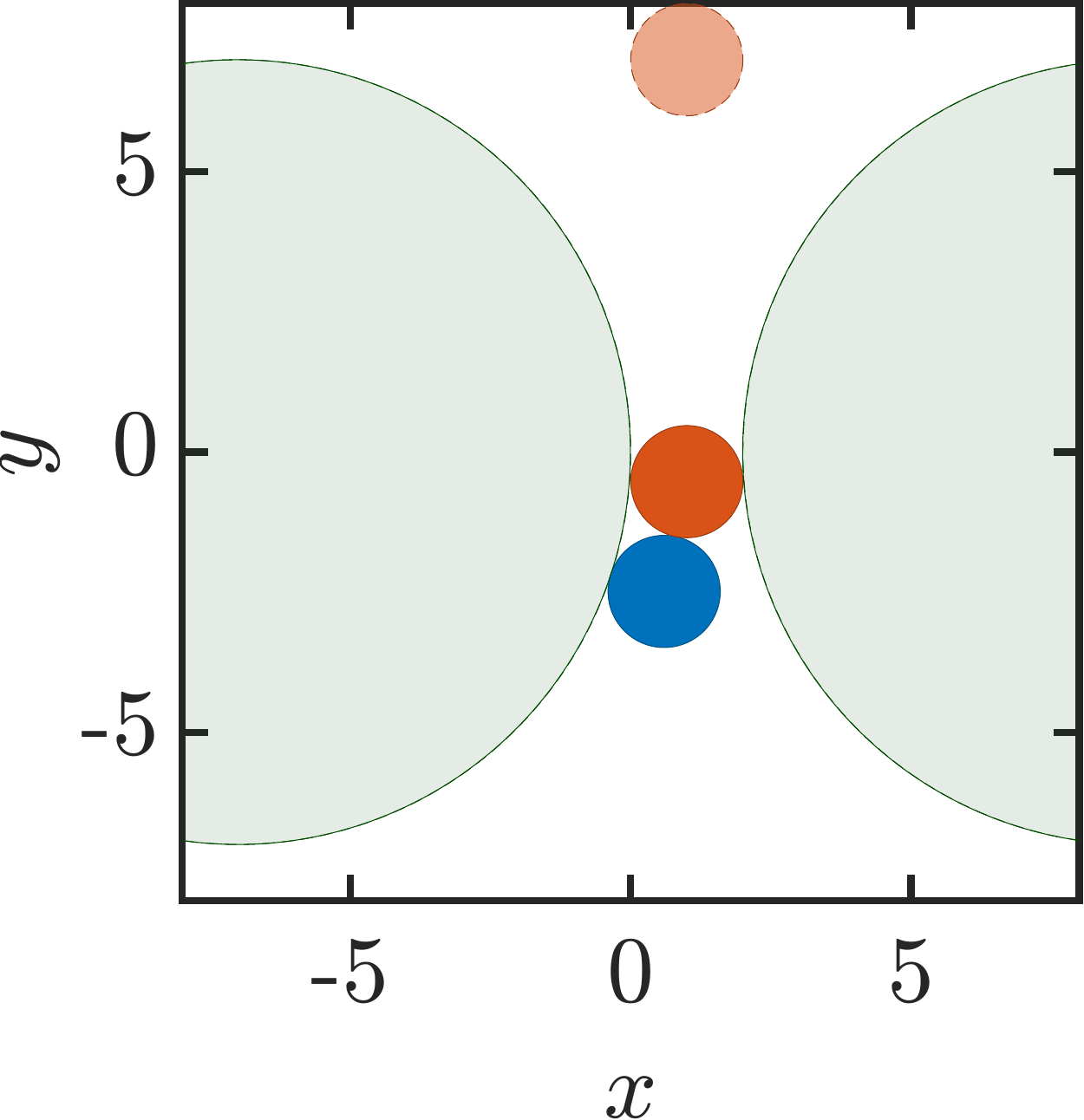}
    \caption{$t=6.3913$}
  \end{subfigure}
  \begin{subfigure}{0.19\linewidth}
    \includegraphics[width=\linewidth]{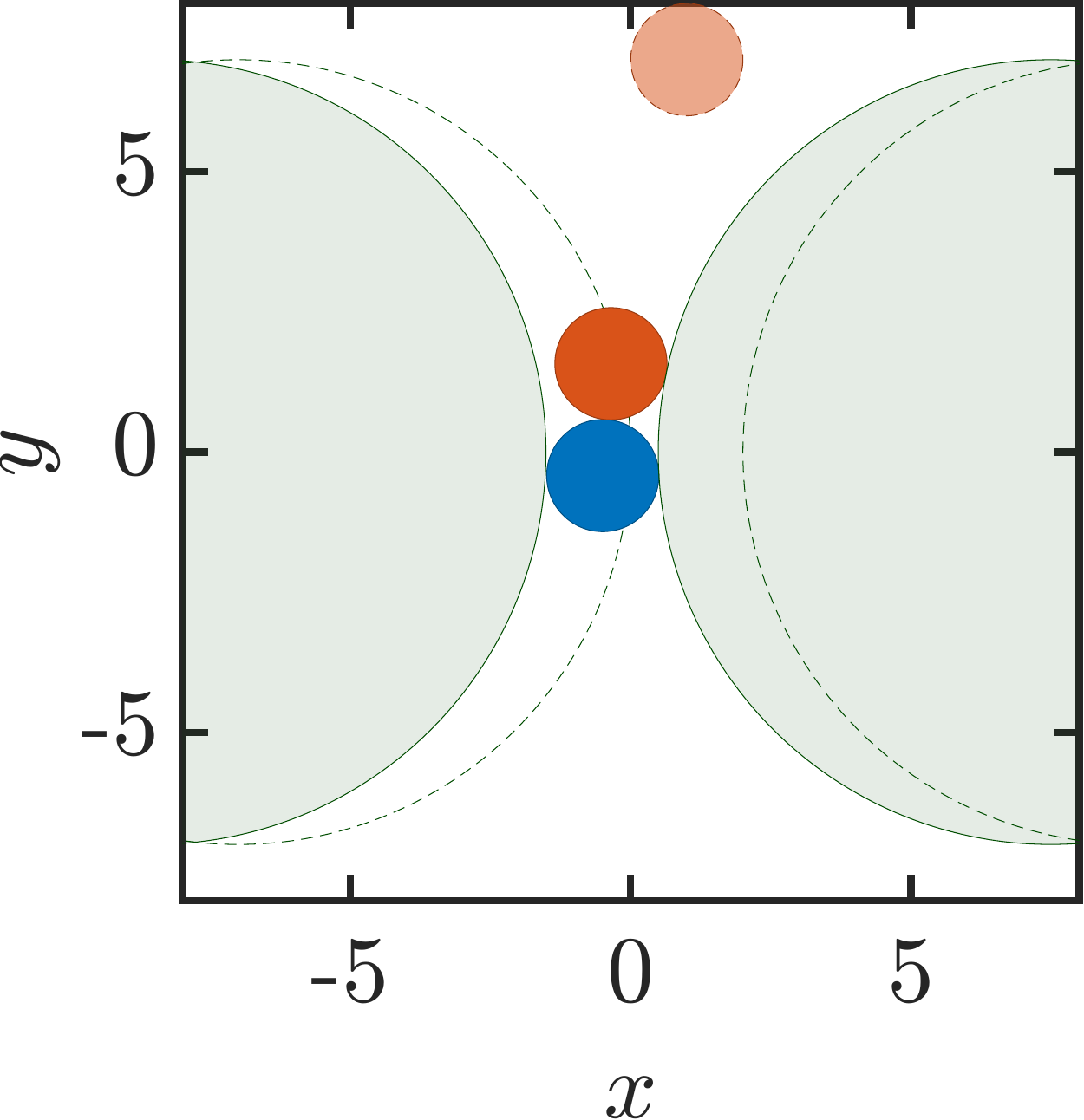}
    \caption{$t=9.0000$}
  \end{subfigure}
  \begin{subfigure}{0.19\linewidth}
    \includegraphics[width=\linewidth]{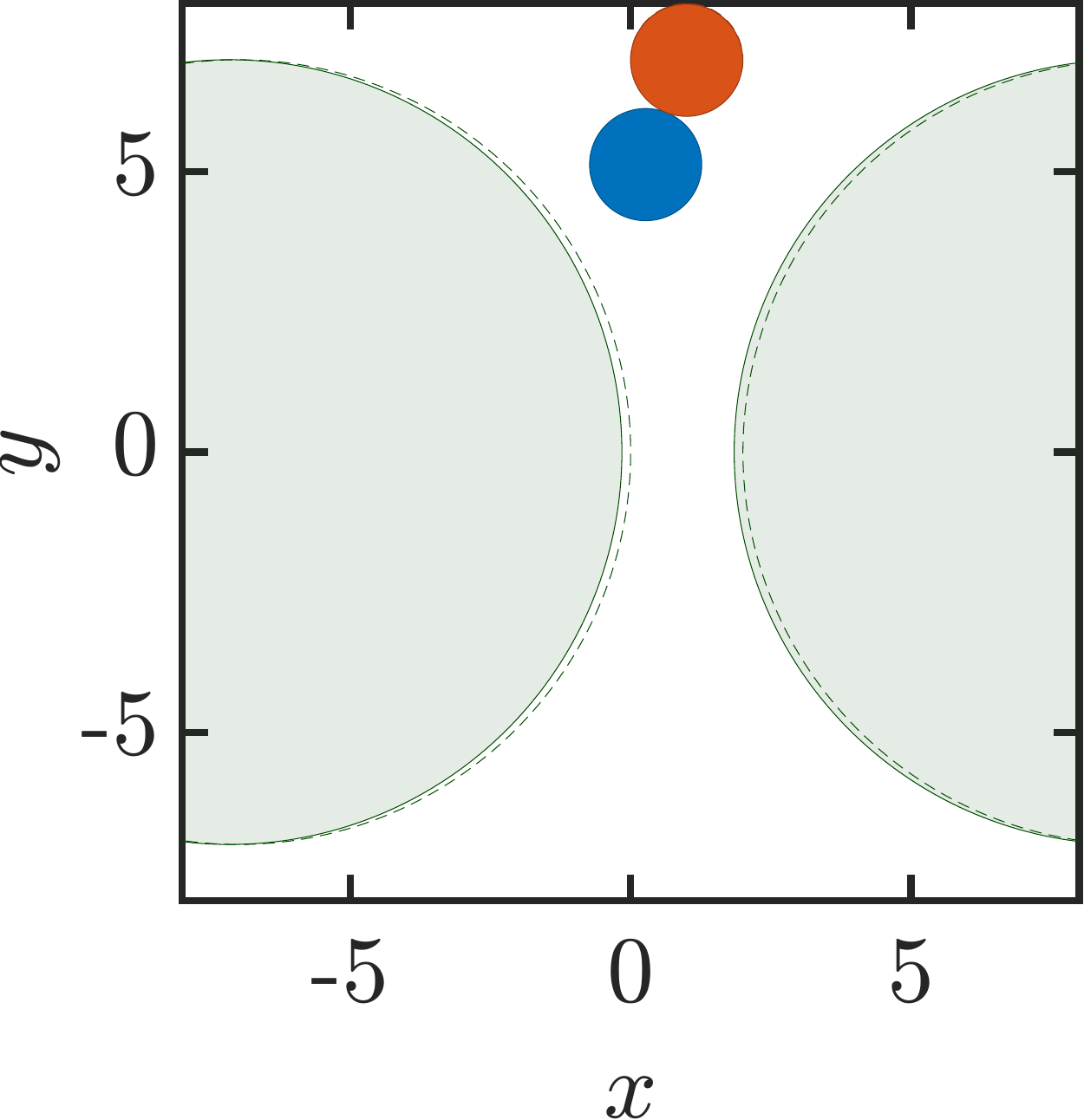}
    \caption{$t=12.0000$}
  \end{subfigure}
  \vspace{-0.2cm}
  \caption{Several frames of the solution for the manipulation problem with autonomous discs.}
  \vspace{-0.8cm}
  \label{fig:auton-discs}
\end{figure*}
In this OCP, we demonstrate a manipulation example with a moving doorway made up of two discs whose centers follow a sinusoidal pattern.
In this problem the goal is for an actuated pusher to push a disc through the moving doorway.
The continuous time OCP is written as:
\begin{mini*}
  {\substack{x(\cdot), u(\cdot)}}
  {\int_{0}^{T}\!u(t)^2 \dd t + 1000\norm{(x_2,x_3) - (1,7)}^2}
  {}
  {}
  \addConstraint{x(t)}{\in (0,0,u(t)) - \normcone{\C}{x},\ }{t\in[0,T]}
  \addConstraint{x(0)}{=\hat{x}}
  \addConstraint{-5}{\le u_i(t)\le 5,}
\end{mini*}
with $x\in\R^4$, $u\in\R^2$, $\hat{x} = (-1,-6,1,-5)$, and time-varying set ${\C(t) = \Set*{z\in\R^4}{c(z,t)\ge 0}}$ with:
\begin{align*}
  c_1(x,t) &= \norm{(x_1,x_2) - (x_3,x_4)}_2^2 - 2^2,\\
  c_2(x,t) &= \norm{(x_1,x_2) - (\cos(t)+8,0)}_2^2 - 8^2,\\
  c_3(x,t) &= \norm{(x_1,x_2) - (\cos(t)-8,0)}_2^2 - 8^2,\\
  c_4(x,t) &= \norm{(x_3,x_4) - (\cos(t)+8,0)}_2^2 - 8^2,\\
  c_5(x,t) &= \norm{(x_3,x_4) - (\cos(t)-8,0)}_2^2 - 8^2,
\end{align*}
which represent contacts between the unactuated slider, the actuated pusher, and the autonomous discs.
These functions form a moving aperture the size of the pusher and slider that follows a sinusoidal motion in the horizontal axis.
Several frames of the solution to this problem are given in \Cref{fig:auton-discs}.
Full solutions and several further examples are available at \href{https://youtu.be/hD1VMQGFEfw}{https://youtu.be/hD1VMQGFEfw}.

\section{Conclusions and Future Work}
In this paper, we show that under certain conditions the solutions to ePDS correspond to solutions of a dynamic complementarity system.
We then show that Moreau's first-order sweeping processes with finitely defined moving sets can be re-formulated as an ePDS with a clock state whose dynamics are not projected.
These reformulations allow us to use a lightly modified form of the finite elements with switch detection discretization to accurately discretize Moreau's first order sweeping processes.
The approaches discussed are implemented in the open source software package \nosnoc.
\bibliographystyle{ieeetran}
\bibliography{syscop}
\end{document}